\newtheorem{theorem}{Theorem}[section]
\newtheorem{lemma}[theorem]{Lemma}
\newtheorem{corollary}[theorem]{Corollary}
\theoremstyle{definition}
\newtheorem{definition}[theorem]{Definition}
\newtheorem{example}[theorem]{Example}
\newtheorem{question}[theorem]{Question}
\newtheorem{remark}[theorem]{Remark}
\newcommand{\id}{\text{id}}
\newcommand{\Ker}{\text{Ker\,}}
\newcommand{\End}{{\rm End}}
\newcommand{\Fun}{\text{Fun}}
\newcommand{\Irr}{\text{\rm Irr}}
\newcommand{\FPdim}{\text{\rm FPdim}}
\newcommand{\Hom}{{\rm Hom}}
\newcommand{\Rep}{\text{Rep}}
\newcommand{\Vect}{{\rm Vec}}
\newcommand{\A}{\mathcal{A}}
\newcommand{\B}{\mathcal{B}}
\newcommand{\C}{\mathcal{C}}
\newcommand{\D}{\mathcal{D}}
\newcommand{\M}{\mathcal{M}}
\newcommand{\N}{\mathcal{N}}
\newcommand{\ot}{\otimes}
\newcommand{\ben}{\begin{enumerate}}
\newcommand{\een}{\end{enumerate}}
\newcommand{\Z}{{\mathcal Z}}
\newcommand{\Mod}{\mbox{Mod}}
\newcommand{\Bimod}{\mbox{Bimod}}
\begin{document}

\title[Exact factorizations and extensions of fusion categories] {Exact factorizations and extensions of fusion categories}

\author{Shlomo Gelaki}
\address{Department of Mathematics, Technion-Israel Institute of
Technology, Haifa 32000, Israel} \email{gelaki@math.technion.ac.il}

\date{\today}

\keywords{fusion categories;
module categories; exact sequences of fusion categories; exact factorizations of fusion categories}

\begin{abstract}
We introduce and study the new notion of an {\em exact factorization} $\B=\A\bullet \C$ of a fusion category $\B$ into a product of two fusion subcategories $\A,\C\subseteq \B$ of $\B$. This is a categorical generalization of the well known notion of an exact factorization of a finite group into a product of two subgroups. We then relate exact factorizations of fusion categories to exact sequences of fusion categories with respect to an indecomposable module category, which was introduced and studied in \cite{EG}. We also apply our results to study extensions of a group-theoretical fusion category by another one, provide some examples, and propose a few natural questions. 
\end{abstract}

\maketitle

\section{introduction}

Finite groups with exact factorization are fundamental objects in group theory which naturally show up in many interesting results in the theories of Hopf algebras and tensor categories (see, e.g., \cite{BGM}, \cite{K}, \cite{M}, \cite{Na}, \cite{O}). 
Recall that a finite group $G$ admits an exact factorization $G=G_1G_2$ into a product of two subgroups $G_1,G_2\subseteq G$ of $G$ if $G_1$ and $G_2$ intersect trivially and the order of $G$ is the product of the orders of $G_1$ and $G_2$. Equivalently, $G=G_1G_2$ is an exact factorization if every element $g\in G$ can be uniquely written in the form $g=g_1g_2$, where $g_1\in G_1$ and $g_2\in G_2$.

Our first goal in this paper is to provide a categorical generalization of the notion of exact factorizations of finite groups. More precisely, we introduce and study the new notion of an {\em exact factorization} $\B=\A\bullet \C$ of a fusion category $\B$ into a product of two fusion subcategories $\A,\C\subseteq \B$ of $\B$. We say that $\B=\A\bullet \C$ is an exact factorization if $\A\cap \C=\Vect$ and $\FPdim(\B)= \FPdim(\A)\FPdim(\C)$. Then in Theorem \ref{exfac} we prove that $\B=\A\bullet \C$ if and only if every simple object of $\B$ can be uniquely expressed in the form $X\ot Y$, where $X,Y$ are simple objects of $\A,\C$, respectively. For example, exact factorizations $\B=\Vect(G_1)\bullet \Vect(G_2)$ 
are classified by groups $G$ with exact 
factorization $G=G_1G_2$ and a cohomology class $\omega\in H^3(G,k^{\times})$ 
which is trivial on $G_1$ and $G_2$ (but not necessarily on $G$). 

Recall next that the theory of exact sequences of tensor categories 
was introduced by A. Brugui\`eres and S. Natale
\cite{BN, BN1} as a categorical generalization of the theory of exact sequences of Hopf algebras.
In their definition of an exact sequence of tensor categories
\begin{equation*}\label{exactseq}
\mathcal{A}\xrightarrow{\iota}\mathcal{B}\xrightarrow{F}\mathcal{C}
\end{equation*} 
the category $\mathcal{A}$ 
is forced to have a tensor functor to ${\rm Vec}$ (so to be the representation category of a Hopf algebra). Later on in \cite{EG} we generalized the definition of \cite{BN} further to 
eliminate this drawback, and in particular to include the example of the Deligne tensor product
$\mathcal{B}:=\mathcal{A}\boxtimes \mathcal{C}$
for any finite tensor categories $\mathcal{A}$, $\mathcal{C}$. 
We did so by replacing the category ${\rm Vec}$ 
by the category ${\rm End}(\mathcal{N})$ of endofunctors of 
an indecomposable $\mathcal{A}$-module category $\mathcal{N}$, 
and defined the notion of an {\em exact sequence 
\begin{equation}\label{exseqfc}
\mathcal{A}\xrightarrow{\iota} \mathcal{B}\xrightarrow{F} \mathcal{C}\boxtimes \End(\N)
\end{equation}
with respect to $\mathcal{N}$}. We showed that the dual of an exact sequence is again an exact sequence. We also showed that for any exact sequence (\ref{exseqfc}), 
${\rm FPdim}(\mathcal{B})={\rm FPdim}(\mathcal{A}){\rm FPdim}(\mathcal{C})$, and that this property in fact 
characterizes exact sequences (provided that $\iota$ is injective, $F$ is surjective, and $\A\subseteq {\rm Ker}(F)$). 
Moreover, we showed that if in an exact sequence (\ref{exseqfc}), $\A$ and $\C$ are fusion categories, then so is $\B$.

Our second goal in this paper is to relate exact factorizations of fusion categories with exact sequences.
More precisely, in Theorem \ref{mainnew} we prove that an exact sequence of fusion categories (\ref{exseqfc}) defines  
an exact factorization $\mathcal{B}^*_{\C\boxtimes \mathcal{N}}=\C\bullet \A^*_{\N}$, and vice versa, 
any exact factorization $\B=\A\bullet \C$ of fusion categories gives rise to an exact sequence (\ref{exseqfc}) with respect to any indecomposable module category $\mathcal{N}$ over $\mathcal{A}$.

The structure of the paper is as follows. In Section 2 we recall some necessary background on module categories over fusion categories, exact sequences of fusion categories, and the class of group-theoretical fusion categories. In Section 3 we introduce and study factorizations $\B=\A\C$ and exact factorizations $\B=\A\bullet \C$ of a fusion category $\B$ into a product of fusion subcategories $\A,\C\subseteq \B$ of $\B$. In Section 4 we prove  Theorem \ref{mainnew}, in which we relate exact factorizations of fusion categories to exact sequences of fusion categories with respect to an indecomposable module. We then apply our results to group-theoretical fusion categories, and deduce in Corollary \ref{gtext} that any extension of a group-theoretical fusion category by another one is {\em Morita equivalent} to a fusion category with an exact factorization into a product of two group-theoretical fusion subcategories. We also discuss some examples of exact factorizations and exact sequences (e.g., of Kac (quasi-)Hopf algebras in Corollary \ref{kacalg}), and propose some natural questions.

\begin{remark}
We plan to extend the results of this paper to nonsemisimple finite tensor categories in a subsequent paper.
\end{remark}

{\bf Acknowledgements.} I am grateful to Pavel Etingof for stimulating and helpful discussions. 
Part of this work was done while I was visiting the Department of Mathematics at the University of Michigan in Ann Arbor; I am grateful for their warm hospitality.
This work was partially supported by the Israel Science Foundation (grant no. 561/12). 

\section{Preliminaries}
Throughout the paper, $k$ will denote an algebraically closed field of characteristic $0$. All the categories mentioned in this paper are assumed to be $k$-linear abelian and {\em finite}.

We refer the reader to the book \cite{EGNO} as a general reference for the theory of fusion categories.

\subsection{Module categories}
Let $\mathcal{A}$ be a fusion category over $k$ (see \cite{EGNO}).
Let $\M$ be a left semisimple $\A$-module category, and let $\N$ be a right semisimple $\A$-module category. 
Consider the tensor product 
$\mathcal{N} \boxtimes_{\mathcal{A}}\mathcal{M}$ (see \cite{ENO2}).
Namely, if $A_1,A_2$ are algebras in $\A$ such that $\M={\rm mod}-A_1$ and 
$\N=A_2-{\rm mod}$, then $\mathcal{N} \boxtimes_{\mathcal{A}}\mathcal{M}$
is the category of $(A_2,A_1)$-bimodules in $\A$, 
which can also be described as the category 
of left $A_2$-modules in $\M$, or the category 
of right $A_1$-modules in $\N$ (see \cite[Section 7.8]{EGNO}).

Recall that a left semisimple $\A$-module category $\M$ is said to be {\em indecomposable} if it is not a direct sum of
two nonzero module categories. Let $\mathcal{M}$ be an indecomposable 
$\mathcal{A}$-module category\footnote{Here and below, by ``a module category'' we will mean a ``left semisimple module category", unless otherwise specified.}. 
Let $\End(\M)$ be the abelian category of endofunctors of
$\mathcal{M}$, and let $\mathcal{A}_{\mathcal{M}}^*:=
\End_{\mathcal{A}}(\mathcal{M})$ be the dual category of $\A$ with respect to $\M$, i.e., the category of
$\mathcal{A}-$linear endofunctors of $\mathcal{M}$. Recall that composition of functors turns $\End(\M)$ into a multifusion category, 
and $\mathcal{A}_{\mathcal{M}}^*$ into a fusion category.

Let ${\rm Gr}(\A)$ be the Grothendieck ring of $\A$.  
Recall that we have a character $\FPdim: {\rm Gr}(\A)\to \Bbb R$, attaching to $X\in \A$ the Frobenius-Perron dimension $\FPdim(X)$ of $X$. 
Recall also that we have a virtual object $R_\A\in {\rm Gr}(\A)\otimes_{\Bbb Z}\Bbb R$, such that 
$XR_\A=R_\A X=\FPdim(X) R_\A$ for all $X\in {\rm Gr}(\A)$. Namely, we have 
$R_\A=\sum_i \FPdim(X_i)X_i$, where $X_i$ are the simple objects of $\A$. We set $\FPdim(\A):=\FPdim(R_\A)$ \cite{ENO1}.

Also, if $\M$ is an indecomposable $\A$-module category, let 
${\rm Gr}(\M)$ be the Grothendieck group of $\M$. 
Let $\lbrace{ M_j\rbrace}$ be the basis of simple objects of $\M$. It follows from the Frobenius-Perron theorem that there is a unique 
up to scaling element $R_\M\in {\rm Gr}(\M)\otimes_{\Bbb Z}\Bbb R$ such that for every $X\in {\rm Gr}(\A)$, 
$XR_\M=\FPdim(X)R_\M$. Namely, we have that
$$R_\M=\sum_j \FPdim(M_j)M_j.$$ 
The numbers $\FPdim(M_j)$ are defined uniquely up to scaling by the 
property $$\FPdim(X\otimes M)=\FPdim(X)\FPdim(M),\,\,\,X\in \A,\, M\in \M,$$ and it is convenient to normalize them in such a way that $$\FPdim(R_\M)=\FPdim(\A),$$ which we
will do from now on (see \cite[Proposition 3.4.4, Exercise 7.16.8]{EGNO}, and \cite[Subsection 2.2]{ENO2}). 
It is clear that $$R_\A M_j=\FPdim(M_j)R_\M.$$

\subsection{Exact sequences of fusion categories with respect to a module category} 
Let $\A\subseteq \B$, $\C$ be fusion categories, and let $\M$ be an indecomposable module category over $\A$.
Let $F:\B\to \C\boxtimes \End(\M)$ be a surjective (= dominant) functor such that 
$\mathcal{A}=\Ker(F)$ (= the subcategory of $X\in \B$ such that $F(X)\in \End(\M)$).
Recall \cite{EG} that $F$ {\em defines an exact sequence 
\begin{equation}\label{ES}
\mathcal{A}\xrightarrow{\iota} \mathcal{B}\xrightarrow{F} \mathcal{C}\boxtimes \End(\M)
\end{equation}
with respect to $\mathcal{M}$} (= {\em $F$ is normal}),
if for every object $X\in \mathcal{B}$ there exists a subobject $X_0\subseteq X$ such that $F(X_0)$ 
is the largest subobject of $F(X)$ contained in $\End(\M)\subseteq \C\boxtimes \End(\M)$.  
In this case we will also say that $\B$ is an {\it extension of $\C$ by $\A$ 
with respect to $\M$} \cite{EG}. 

Note that if $\M={\rm Vec}$, this definition coincides with that of \cite{BN}. 
In particular, if $H\subseteq G$ are finite groups, $\B:=\Rep(G)$, $\C:=\Rep(H)$,
$F$ is the restriction functor, $\A:={\rm Ker}(F)$, and 
$\M:={\rm Vec}$, then $F$ is normal if and only if $H$ is a normal subgroup of $G$, which motivates the terminology. 
Also, it is clear that if $\B=\C\boxtimes \A$ and $F$ is the obvious functor, 
then $F$ defines an exact sequence with respect to $\M$. 
So $\C\boxtimes \A$ is an extension of $\C$ by $\A$ with respect to any 
indecomposable $\A$-module category $\M$ (e.g., $\M=\A$). 

By \cite[Theorem 3.4]{EG}, (\ref{ES}) defines an exact sequence of fusion categories if and only if $\FPdim(\B)=\FPdim(\A)\FPdim(\C)$. 

Now let $\mathcal{N}$ be an indecomposable module category over $\mathcal{C}$. 
Then the multifusion category $\mathcal{C}\boxtimes \End(\M)$ acts on $\mathcal{N}\boxtimes \mathcal{M}$ componentwiselly, and we have natural isomorphisms
$$(\mathcal{C}\boxtimes \End(\M))_{\mathcal{N}\boxtimes \mathcal{M}}^*\cong\mathcal{C}_{\mathcal{N}}^*\boxtimes \End(\M)^*_{\M}\cong\mathcal{C}_{\mathcal{N}}^*\boxtimes \Vect\cong\mathcal{C}_{\mathcal{N}}^*.$$ By \cite[Theorem 4.1]{EG}, the dual sequence to (\ref{ES}) with respect to $\N\boxtimes \M$: 
\begin{equation}\label{ES1}
\mathcal{A}_{\mathcal{M}}^*\boxtimes \End(\mathcal{N})\xleftarrow{\iota^*} \mathcal{B}_{\mathcal{N}\boxtimes \mathcal{M}}^*\xleftarrow{F^*} \mathcal{C}_{\mathcal{N}}^*
\end{equation}
is exact with respect to $\N$.

\subsection{Group-theoretical fusion categories} Let $G$ be a finite group, and let $\omega\in Z^3(G,k^\times)$ be a $3$-cocycle. Let $\Vect(G,\omega)$ be the 
fusion category of finite dimensional $G$-graded vector spaces with associativity defined by $\omega$. The simple objects of $\Vect(G,\omega)$ are invertible and are in correspondence with elements $g$ of $G$.

Let $L\subseteq G$ be a subgroup, and let $\psi\in C^2(L,k^\times)$ be a $2$-cochain such that $d\psi=\omega|_{L}$. 
Let
$\mathcal{M}(L,\psi)$ be the corresponding indecomposable module category over $\Rep(G)$, and let
$\tilde{\mathcal{M}}(L,\psi)$ be the corresponding indecomposable module category over $\Vect(G,\omega)$ (see \cite[Subsection 8.8]{ENO1}).
Namely, $\mathcal{M}(L,\psi)$ is the category of representations of the twisted group algebra $k^{\psi}[L]$, while $\tilde{\mathcal{M}}(L,\psi)=\Mod_{\Vect(G,\omega)}(k^{\psi}[L])$ is the category of right $k^{\psi}[L]$-modules in $\Vect(G,\omega)$. The simple objects of $\tilde{\mathcal{M}}(L,\psi)$ are in correspondence with cosets $gL$ in $G/L$.

Recall \cite{O} that the group-theoretical fusion category $\C(G,\omega,L,\psi)$ is the dual category $\Vect(G,\omega)^{*}_{\tilde{\mathcal{M}}(L,\psi)}=\Bimod_{\Vect(G,\omega)}(k^{\psi}[L])$. For example, $\C(G,\omega,1,1)=\Vect(G,\omega)$ and $\C(G,1,G,1)=\Rep(G)$. By \cite[Theorem 3.1]{O}, the indecomposable module categories over $\C(G,\omega,L,\psi)$ are parametrized by the conjugacy classes of pairs $(L_1,\psi_1)$ where $L_1\subseteq G$ is a subgroup such that $\omega_{\mid L_1}=1$ and $\psi_1 \in H^2(L_1,k^{\times})$ is a cohomology class. Namely, the module category corresponding to a pair $(L_1,\psi_1)$ is the category $\M(L_1,\psi_1):=\Bimod_{\Vect(G,\omega)}(k^{\psi}[L],k^{\psi_1}[L_1])$. We have,
\begin{equation}\label{dualgt}
\C(G,\omega,L,\psi)^*_{\M(L_1,\psi_1)}=\C(G,\omega,L_1,\psi_1).
\end{equation}
For example, $\Rep(G)^{*}_{\mathcal{M}(L_1,\psi_1)}=\C(G,1,L_1,\psi_1)$.

\section{exact factorizations of fusion categories}

Let $\B$ be a fusion category, and let $\A,\C\subseteq \B$ be fusion subcategories of $\B$. Let $\A\C$ be 
the full abelian (not necessarily tensor) subcategory 
of $\B$ spanned by direct summands in $X\otimes Y$, where $X\in \A$ and $Y\in \C$. Observe that since $\A\C$ contains the unit object ${\bf 1}$ it follows that $\A\C$ is an indecomposable semisimple $\A\boxtimes \C^{op}$-submodule category of $\B$. 

\begin{definition}\label{facdef}
We say that $\B$ {\em factorizes} into a product of $\A$ and $\C$ if $\B=\A\C$. (Equivalently, if $\B$ is an indecomposable module category over $\A\boxtimes \C^{op}$.) 
\end{definition}

For a full abelian subcategory $\mathcal{E}\subseteq \A$, let 
$$\FPdim(\mathcal{E}):=\sum_{X\in \Irr(\mathcal{E})}\FPdim(X)^2,$$ and let $R_{\mathcal{E}}:=\sum_{X\in Irr(\mathcal{E})} \FPdim(X)X$. 
 
\begin{lemma} 
Let $\B$ be a fusion category, let $\A,\C\subseteq \B$ be fusion subcategories of $\B$, and 
let $\D:=\A\cap \C$. Then 
$$\FPdim(\A)\FPdim(\C)=\FPdim(\A\C)\FPdim(\D).$$
\end{lemma}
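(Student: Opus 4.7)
The approach is to compute the product $R_\A\cdot R_\C\in{\rm Gr}(\B)\otimes_{\mathbb Z}\mathbb R$ in two different ways and then take Frobenius-Perron dimensions.

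First I would observe that $R_\A R_\C$ lies in the span of the simples of $\A\C$, and that for every simple $V\in\A$ and $W\in\C$ one has
$$V\cdot(R_\A R_\C)=(VR_\A)R_\C=\FPdim(V)\,R_\A R_\C,\qquad (R_\A R_\C)\cdot W=R_\A(R_\C W)=\FPdim(W)\,R_\A R_\C.$$
Thus $R_\A R_\C$ is a common Frobenius-Perron eigenvector for the action of $\A\boxtimes\C^{\op}$ on the indecomposable module category $\A\C$ (the indecomposability having been noted just before Definition \ref{facdef}, since $\A\C$ contains $\be$). By the uniqueness of the Frobenius-Perron eigenvector recalled in Section 2.1, we conclude that $R_\A R_\C=\lambda\,R_{\A\C}$ for some $\lambda>0$.

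Next I would pin down $\lambda$ by comparing the multiplicity of $\be$ on both sides. On the right, this multiplicity is $\lambda\cdot\FPdim(\be)=\lambda$. On the left, it is
$$\sum_{V\in\Irr(\A),\,W\in\Irr(\C)}\FPdim(V)\FPdim(W)\,[\be:V\otimes W],$$
and the term $[\be:V\otimes W]$ is nonzero only when $W\cong V^*$, which forces $V\in\A\cap\C=\D$, and in that case equals $1$. Using $\FPdim(V^*)=\FPdim(V)$, the sum collapses to $\sum_{V\in\Irr(\D)}\FPdim(V)^2=\FPdim(\D)$, so $\lambda=\FPdim(\D)$.

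Finally, applying the character $\FPdim$ to the identity $R_\A R_\C=\FPdim(\D)\,R_{\A\C}$ yields $\FPdim(\A)\FPdim(\C)=\FPdim(\D)\FPdim(\A\C)$, as desired. The only step that requires real care is the uniqueness of the Frobenius-Perron eigenvector, which rests on $\A\C$ being indecomposable as an $\A\boxtimes\C^{\op}$-module category; but this is already part of the setup, so the argument is essentially a direct Frobenius-Perron calculation.
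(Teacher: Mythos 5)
Your proposal is correct and follows essentially the same route as the paper: identify $R_\A R_\C$ as a Frobenius--Perron eigenvector for the $\A\boxtimes\C^{\op}$-action on the indecomposable module category $\A\C$, deduce $R_\A R_\C=\lambda R_{\A\C}$ by uniqueness, and determine $\lambda=\FPdim(\D)$ by pairing with $\be$ (the paper phrases this as $(R_\A R_\C,\be)=(R_\A,R_\C)$, which is the same multiplicity count you carry out explicitly). Applying $\FPdim$ then gives the identity, exactly as in the paper.
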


\begin{proof}
Since $\A\C$ is an indecomposable module category over $\A\boxtimes \C^{op}$, we have $R_{\mathcal{A}}R_{\mathcal{C}}=\lambda R_{\A\C}$, since 
both $R_{\A\C}$ and $R_{\mathcal{A}}R_{\mathcal{C}}$ are positive eigenvectors
for actions of $\A$ and $\C$, and such a vector is unique 
up to scaling (see Subsection 2.1). To find $\lambda$, let us use the inner product given by $(X,Y):=\dim \Hom(X,Y)$. We have 
$$(R_{\A\C},{\bf 1})=1,$$
and
$$
(R_{\mathcal{A}}R_{\mathcal{C}},{\bf 1})=(R_{\mathcal{A}},R_{\mathcal{C}})=
\sum_{X\in \Irr(\D)}\FPdim(X)^2=\FPdim(\D).$$ 
So $\lambda=\FPdim(\D)$, and 
$R_{\mathcal{A}}R_{\mathcal{C}}=\FPdim(\D)R_{\A\C}$.
Taking $\FPdim$ of both sides, we get the statement. 
\end{proof}

\begin{corollary}\label{compdim}
Let $\B$ be a fusion category, let $\A,\C\subseteq \B$ be fusion subcategories of $\B$, and 
let $\D:=\A\cap \C$. We have $$\FPdim(\B)\ge \frac{\FPdim(\A)\FPdim(\C)}{\FPdim(\D)},$$ 
and we have a factorization $\B=\A\C$ 
if and only if this inequality is an equality. \qed
\end{corollary}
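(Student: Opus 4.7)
The plan is to derive the corollary as an immediate consequence of the preceding lemma, together with the elementary observation that $\FPdim$ is monotone with respect to inclusion of full abelian subcategories of a fusion category.

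First I would invoke the lemma to rewrite the claimed inequality. Since $\FPdim(\A)\FPdim(\C) = \FPdim(\A\C)\FPdim(\D)$ and $\FPdim(\D) > 0$, the statement to be proved becomes equivalent to
\[
\FPdim(\B) \;\ge\; \FPdim(\A\C),
\]
with equality precisely when $\B = \A\C$.

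Next I would establish this reformulated inequality. Recall that $\A\C$ is a full abelian subcategory of $\B$, so $\Irr(\A\C) \subseteq \Irr(\B)$. Since by definition $\FPdim(\mathcal{E}) = \sum_{X \in \Irr(\mathcal{E})} \FPdim(X)^2$ and each summand $\FPdim(X)^2$ is strictly positive, the sum defining $\FPdim(\A\C)$ is a subsum of the one defining $\FPdim(\B)$; hence $\FPdim(\A\C) \le \FPdim(\B)$.

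Finally, for the equality case: equality forces $\Irr(\A\C) = \Irr(\B)$ (again because every $\FPdim(X)^2 > 0$), and since both are full semisimple subcategories of $\B$ determined by their simple objects, this is equivalent to $\A\C = \B$, i.e.\ to the factorization condition of Definition \ref{facdef}. Conversely, if $\B = \A\C$ the equality is tautological. There is no substantive obstacle here; the only thing to be careful about is the positivity of Frobenius--Perron dimensions, which is what lets one pass from an inequality of sums to an equality of index sets.
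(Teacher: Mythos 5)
Your proof is correct and is exactly the argument the paper intends: the corollary is stated with a \qed because it follows immediately from the preceding lemma once one observes that $\FPdim(\A\C)\le\FPdim(\B)$ (a subsum of positive terms) with equality precisely when $\Irr(\A\C)=\Irr(\B)$, i.e.\ $\A\C=\B$. Nothing is missing.
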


\begin{definition}\label{exfacdef}
A factorization $\B=\A\C$ of a fusion category $\B$
into a product of two fusion subcategories $\A,\C\subseteq \B$ of $\B$ is called {\em exact} if $\A\cap \C=\Vect$, and is denoted by $\B=\A\bullet \C$.
\end{definition}

\begin{remark}\label{permute}
Note that since $(\A\C)^*=\C\A$, if $\B=\A\C$ then $\B=\C\A$ and if 
$\B=\A\bullet \C$ then $\B=\C\bullet \A$. 
\end{remark}

\begin{example}
Exact factorizations $\C=\C_1\bullet \C_2$, where $\C_i=\Vect(G_i,\omega_i)$ 
are arbitrary pointed fusion categories ($i=1,2$), 
are classified by groups $G$ with exact 
factorization $G=G_1G_2$ and $\omega\in H^3(G,k^{\times})$ 
which restricts to $\omega_1$ on $G_1$ and to $\omega_2$ on $G_2$. 
\end{example}

\begin{lemma}\label{simple}
Let $\B$ be a fusion category, and let $\A,\C\subseteq \B$ be fusion subcategories of $\B$ such that $\A\cap \C=\Vect$. Then for any simple objects $X\in \A$ and $Y\in \C$, $X\otimes Y$ is simple in $\B$ and it determines $X,Y$. 
\end{lemma}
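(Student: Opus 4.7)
\bigskip

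\noindent\textbf{Proof plan for Lemma \ref{simple}.}

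The plan is to compute $\dim \End_\B(X\otimes Y)$ by moving everything over to one side via the rigidity adjunctions, so that the resulting $\Hom$ space lies between an object of $\A$ and an object of $\C$. Concretely, I would start with
$$\End_\B(X\otimes Y)\;\cong\;\Hom_\B\!\bigl(X^{*}\otimes X,\; Y\otimes Y^{*}\bigr),$$
which is just the standard duality/adjunction in $\B$. Since $\A$ and $\C$ are fusion subcategories of $\B$, the object $X^{*}\otimes X$ lies in $\A$ and $Y\otimes Y^{*}$ lies in $\C$.

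The next step is to decompose both sides into simples and compare constituents. A simple object of $\B$ that appears as a direct summand of both an object of $\A$ and an object of $\C$ must be simple in $\A\cap \C=\Vect$, hence isomorphic to $\be$. Therefore
$$\dim\Hom_\B\!\bigl(X^{*}\otimes X,Y\otimes Y^{*}\bigr)
\;=\;\bigl[X^{*}\otimes X:\be\bigr]\cdot \bigl[Y\otimes Y^{*}:\be\bigr].$$
Each of these multiplicities equals $1$ since $X$ (respectively $Y$) is simple: indeed $\Hom_\A(X^{*}\otimes X,\be)\cong \End_\A(X)=k$ and similarly for $Y$. Hence $\dim\End_\B(X\otimes Y)=1$, so $X\otimes Y$ is simple in $\B$.

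For the second assertion, I would repeat the same manipulation for two candidate factorizations. Given simple objects $X,X'\in \A$ and $Y,Y'\in \C$ with $X\otimes Y\cong X'\otimes Y'$, apply rigidity to obtain
$$0\;\ne\;\Hom_\B(X\otimes Y,\;X'\otimes Y')\;\cong\;
\Hom_\B\!\bigl({X'}^{*}\!\otimes X,\;Y'\otimes Y^{*}\bigr).$$
Again ${X'}^{*}\!\otimes X\in \A$ and $Y'\otimes Y^{*}\in \C$, so by the same argument the right-hand side is nonzero only if $\be$ is a summand of both objects, which (for simples $X,X'$ in $\A$ and $Y,Y'$ in $\C$) forces $X\cong X'$ and $Y\cong Y'$.

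The only real subtlety, and the place I would write carefully, is the step that turns $\A\cap \C=\Vect$ into the statement ``a common simple constituent of an $\A$-object and a $\C$-object must be isomorphic to $\be$.'' This uses that $\A\cap \C$, being a fusion subcategory, is spanned by those simples of $\B$ that lie in both $\A$ and $\C$; everything else is a straightforward application of rigidity and Schur's lemma.
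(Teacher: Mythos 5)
Your proposal is correct and follows essentially the same route as the paper: both reduce $\Hom(X\otimes Y, X'\otimes Y')$ via rigidity to $\Hom({X'}^{*}\otimes X, Y'\otimes Y^{*})$ and then use that a common simple constituent of an object of $\A$ and an object of $\C$ must be $\be$ because $\A\cap\C=\Vect$. The only difference is organizational (you treat simplicity and uniqueness as two instances of the same computation, while the paper does both in one pass), and your flagged subtlety is handled correctly since fusion subcategories are full and closed under direct summands.
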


\begin{proof}
For any simple objects $X'\in \A$ and $Y'\in \C$, we have $$\Hom(X\ot Y,X'\ot Y')=\Hom(X'^*\ot X,Y'\ot Y^*).$$
Note that $X'^*\ot X\in \A$ and $Y'\ot Y^*\in \C$. So if the unit object $\bf{1}$ is not contained in both $X'^*\ot X$ and $Y'\ot Y^*$, then $\Hom(X'^*\ot X,Y'\ot Y^*)=0$. Thus $\Hom(X'^*\ot X,Y'\ot Y^*)=0$ unless
$X=X'$ and $Y=Y'$. If so, then there is a single copy of $\bf{1}$ on each side, so $\Hom(X'^*\ot X,Y'\ot Y^*)$ is $1$-dimensional. Thus $X\ot Y$ is simple, and it determines $X,Y$, as claimed.
\end{proof}

\begin{theorem}\label{exfac}
Let $\B$ be a fusion category, and let $\A,\C\subseteq \B$ be fusion subcategories of $\B$. The following are equivalent:

(i) Every simple object of $\B$ can be uniquely expressed in the form $X\ot Y$, where $X\in \A$ and $Y\in \C$ are simple objects.

(ii) $\B=\A\bullet \C$ is an exact factorization in the sense of Definition \ref{exfacdef}. 
\end{theorem}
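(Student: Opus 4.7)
The plan is to deduce both implications almost directly from Lemma \ref{simple}, which already does the heavy lifting, supplemented by the observation that $\B = \A\C$ in the sense of Definition \ref{facdef} is equivalent to saying every simple object of $\B$ is a direct summand of some $X \otimes Y$ with $X \in \A$, $Y \in \C$ simple (by semisimplicity of $\B$).

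\medskip

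\noindent\textbf{(ii) $\Rightarrow$ (i).} Assume $\B = \A \bullet \C$. Take any simple object $Z \in \B$. Since $\B = \A\C$, $Z$ is a direct summand of some $X \otimes Y$ with $X \in \A$ and $Y \in \C$ simple. But by Lemma \ref{simple}, the hypothesis $\A \cap \C = \Vect$ forces $X \otimes Y$ itself to be simple, so $Z \cong X \otimes Y$. The same lemma also asserts that such a pair $(X,Y)$ is determined by $Z$ up to isomorphism, giving uniqueness.

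\medskip

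\noindent\textbf{(i) $\Rightarrow$ (ii).} Assume every simple $Z \in \B$ has a unique expression $Z \cong X \otimes Y$ with $X \in \A$, $Y \in \C$ simple. Then in particular every simple object of $\B$ lies in $\A\C$, and by semisimplicity $\B = \A\C$ as in Definition \ref{facdef}. It remains to check $\A \cap \C = \Vect$. Let $Z$ be any simple object of $\A \cap \C$. Then $Z$ admits the two decompositions
\[
Z \;\cong\; Z \otimes \mathbf{1} \;\cong\; \mathbf{1} \otimes Z,
\]
the first viewed as a product of a simple of $\A$ with a simple of $\C$ via $Z \in \A$ and $\mathbf{1} \in \C$, the second via $\mathbf{1} \in \A$ and $Z \in \C$. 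The uniqueness hypothesis forces the pairs $(Z,\mathbf{1})$ and $(\mathbf{1},Z)$ to coincide, so $Z \cong \mathbf{1}$. Thus every simple object of $\A \cap \C$ is isomorphic to $\mathbf{1}$, i.e.\ $\A \cap \C = \Vect$, and $\B = \A \bullet \C$ as required.

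\medskip

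\noindent The only subtlety is the interpretation of ``unique expression'' in (i): one must read it as uniqueness up to isomorphism of the pair of simples $(X,Y)$, so that the two manifestly distinct pairs $(Z,\mathbf{1})$ and $(\mathbf{1},Z)$ in the second direction are forced to be equal as pairs (not merely to have isomorphic tensor products). Granting that, the argument is essentially a bookkeeping consequence of Lemma \ref{simple}; there is no need to invoke the Frobenius--Perron dimension count of Corollary \ref{compdim} at all, although one could alternatively prove (i) $\Rightarrow$ (ii) by summing dimensions to show $\FPdim(\B) = \FPdim(\A)\FPdim(\C)$ and applying that corollary.
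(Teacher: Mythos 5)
Your proof is correct, and it takes a genuinely more elementary route than the paper's. Both arguments lean on Lemma \ref{simple}, but the paper supplements it with a Frobenius--Perron dimension count in each direction: for (i)$\Rightarrow$(ii) it computes $\FPdim(\B)=\sum\FPdim(X\ot Y)^2=\FPdim(\A)\FPdim(\C)$ and invokes Corollary \ref{compdim} to get $\B=\A\C$, and for (ii)$\Rightarrow$(i) it uses the same count to see that the pairwise non-isomorphic simples $X\ot Y$ exhaust $\Irr(\B)$. You instead exploit Definition \ref{facdef} directly: since $\A\C$ is by definition spanned by direct summands of objects $X\ot Y$, the equality $\B=\A\C$ is literally the statement that every simple of $\B$ is a summand of some $X\ot Y$ with $X,Y$ simple, and Lemma \ref{simple} upgrades ``summand of'' to ``isomorphic to''; conversely, if every simple of $\B$ has the form $X\ot Y$ then $\B=\A\C$ tautologically. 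This bypasses Corollary \ref{compdim} entirely, and you also spell out the step the paper dismisses with ``Clearly'' --- that uniqueness in (i) forces $\A\cap\C=\Vect$ via the two expressions $Z\ot\mathbf{1}$ and $\mathbf{1}\ot Z$ of a simple $Z\in\A\cap\C$. Your reading of ``uniquely'' as uniqueness of the pair $(X,Y)$ up to isomorphism is the intended one. What the paper's dimension-count approach buys is consistency with the surrounding machinery (Corollary \ref{compdim} handles general, non-exact factorizations where $\A\cap\C\neq\Vect$), but for this particular theorem your shorter argument is complete.
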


\begin{proof}
Suppose that (i) holds. Clearly, $\A\cap \C=\Vect$, so by Lemma \ref{simple}, for any simple objects $X\in \A$ and $Y\in \C$, $X\otimes Y$ is simple in $\B$. 
We therefore have,
$$\FPdim(\B)=\sum_{X\in \Irr(\A),\, Y\in \Irr(\C)} \FPdim(X\otimes Y)^2=\FPdim(\A)\FPdim(\C),$$
so by Corollary \ref{compdim}, (ii) holds. 

Conversely, suppose that (ii) holds. Then by Lemma \ref{simple}, for any simple objects $X\in \A$ and $Y\in \C$, $X\otimes Y$ is simple in $\B$ and it determines $X,Y$. Thus, $$\sum_{X\in \Irr(\A),\, Y\in \Irr(\C)} \FPdim(X\otimes Y)^2=\FPdim(\A)\FPdim(\C)=\FPdim(\B),$$
and we see that (i) holds.
\end{proof}

\begin{corollary}\label{bridedfus}
Suppose $\B$ is a braided fusion category with an exact factorization $\B=\A\bullet \C$. 
Then $\B=\A\boxtimes \C$ is a Deligne tensor product, and 
$\A$ and $\C$ projectively centralize each other in the sense of \cite[Section 8.22]{EGNO}.  
\end{corollary}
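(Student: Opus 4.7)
The plan has two parts, both resting on Lemma \ref{simple} (equivalently, the uniqueness half of Theorem \ref{exfac}): for simple $X \in \A$ and $Y \in \C$, the product $X \ot Y$ is simple in $\B$, and nonisomorphic pairs $(X,Y)$ produce nonisomorphic simples of $\B$.

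For the projective centralization, I would fix simple $X \in \A$ and simple $Y \in \C$. Since $X \ot Y$ is simple in $\B$, the endomorphism algebra $\End_\B(X \ot Y)$ is one-dimensional. Hence the double braiding $c_{Y,X} \circ c_{X,Y}$ lies in $k \cdot \id_{X \ot Y}$, which is precisely the condition in \cite[Section 8.22]{EGNO} for $\A$ and $\C$ to projectively centralize each other.

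For the Deligne factorization, I would construct the candidate tensor equivalence
$$F: \A \boxtimes \C \to \B, \quad X \boxtimes Y \mapsto X \ot Y,$$
equipped with the monoidal constraint
$$J_{X \boxtimes Y,\, X' \boxtimes Y'} := \id_X \ot c_{Y, X'} \ot \id_{Y'}$$
coming from the braiding of $\B$. By Theorem \ref{exfac}, $F$ sends a complete list of simples of $\A \boxtimes \C$ bijectively to a complete list of simples of $\B$. In both categories, Hom spaces between simples are $k$ in the matching case and $0$ otherwise (for $\A \boxtimes \C$ by the definition of the Deligne product, for $\B$ because the images are simple and matching is controlled by uniqueness in Theorem \ref{exfac}). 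Hence $F$ is fully faithful; being also essentially surjective, it is a tensor equivalence, which proves $\B = \A \boxtimes \C$.

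The main obstacle is verifying that $J$ satisfies the associativity coherence needed to make $F$ a monoidal functor. This is a direct manipulation reducing to the two hexagon axioms for $c$ in $\B$ and is the only point in the argument requiring any real calculation; the remaining steps are formal consequences of Theorem \ref{exfac}.
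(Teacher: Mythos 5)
Your proposal is correct and follows essentially the same route as the paper: the paper likewise deduces the equivalence $\A\boxtimes\C\xrightarrow{\cong}\B$ from Theorem \ref{exfac}, equips it with the tensor structure $J_{X\boxtimes Y,X'\boxtimes Y'}=\id_X\otimes c_{X',Y}\otimes\id_{Y'}$ whose coherence follows from the hexagon axioms, and obtains projective centralization from the simplicity of $X\otimes Y$ forcing $c_{Y,X}\circ c_{X,Y}$ to be a scalar. The only (immaterial) difference is your choice of $c_{Y,X'}$ rather than $c_{X',Y}$ in the monoidal constraint, which amounts to fixing the opposite direction for $J$.
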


\begin{proof}
By Theorem \ref{exfac}, we have an equivalence of abelian categories $F:\A\boxtimes \C\xrightarrow{\cong} \B$, given by  $F(X\boxtimes Y)=X\otimes Y$ for every objects $X\in \A$ and $Y\in \C$. The equivalence functor $F$  has a tensor structure $J$ defined by the braiding structure $c$ on $\B$. Namely, for every $X,X'\in \A$ and $Y,Y'\in \C$, $$J_{X\boxtimes Y,X'\boxtimes Y'}:F((X\boxtimes Y)\otimes (X'\boxtimes Y'))\xrightarrow{\cong} F(X\boxtimes Y)\otimes F(X'\boxtimes Y')$$ is given by the isomorphism
$$
\id_X\otimes c_{X',Y}\otimes \id_{Y'}:(X\otimes X')\otimes (Y\otimes Y')\xrightarrow{\cong} (X\otimes Y)\otimes 
(X'\otimes Y').$$
It is straightforward to verify that since $c$ satisfies the braiding axioms, $J$ satisfies the tensor functor axioms.

Furthermore, it follows from Theorem \ref{exfac} that for every simple objects $X\in \A$ and $Y\in \C$, $$c_{Y,X}\circ c_{X,Y}:X\otimes Y\xrightarrow{\cong}X\otimes Y$$ is an automorphism of the {\em simple} object $X\otimes Y$ in $\B$. We therefore get that $c_{Y,X}\circ c_{X,Y}=\lambda\cdot \id_{X\otimes Y}$ for some $\lambda\in k^{\times}$, as claimed.
\end{proof}

We conclude this section with a couple of natural questions about the new notion
of exact factorization of fusion categories.

\begin{question}\label{questions}
(i) Let $\A,\C$ be fusion categories. What are the fusion categories $\B$ admitting 
an exact factorization $\B=\A\bullet \C$?

(ii) What can be said about the center $\Z(\B)$ of a fusion category $\B$ with an exact factorization $\B=\A\bullet \C$?
\end{question}

\begin{remark}
At the level of finite dimensional Hopf algebras, Question \ref{questions} (i) is called the ``factorization problem" (see, e.g., \cite{AM1,AM2,ABM} for a systematic study of the factorization problem for groups, Hopf algebras and Lie algebras). 
\end{remark}

\section{Extensions of fusion categories}

Retain the notation from Subsection 2.2.

We are now ready to state and prove our main result, in which we relate exact factorizations of fusion categories with exact sequences of fusion categories.

\begin{theorem}\label{mainnew}
The following hold:

(i) Let
$$\mathcal{A}\xrightarrow{\iota} \mathcal{B}\xrightarrow{F} \mathcal{C}\boxtimes \End(\N)$$
be an exact sequence of fusion categories with respect to the indecomposable $\A$-module category $\N$.
Then the dual fusion category $\mathcal{B}^*_{\C\boxtimes \mathcal{N}}$ admits an exact factorization $\mathcal{B}^*_{\C\boxtimes \mathcal{N}}=\C\bullet \A^*_{\N}$.

(ii) Any exact factorization $\B=\C\bullet \A$ of fusion categories defines an exact sequence of fusion categories with respect to any indecomposable $\A$-module category $\N$. In particular, it defines an exact sequence
$$\A\xrightarrow{\iota} \B\xrightarrow{F} \mathcal{C}\boxtimes \End(\A)$$
with respect to the indecomposable $\A$-module category $\A$. 
\end{theorem}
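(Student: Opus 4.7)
The strategy for both parts is to reduce the assertion to a Frobenius--Perron dimension count: Corollary \ref{compdim} together with Theorem \ref{exfac} for (i), and \cite[Thm.~3.4]{EG} for (ii). The dual exact sequence (\ref{ES1}) provides the technical bridge between the two directions.

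For (i), I would first view $\C\boxtimes \N$ as an indecomposable $\B$-module category, with $\B$ acting through $F$ followed by the componentwise action of $\C\boxtimes \End(\N)$. Then I would exhibit two fusion subcategories of $\B^*_{\C\boxtimes \N}$. The copy of $\C$ is embedded as ``right-multiply-the-first-factor'', which manifestly commutes with the $\B$-action since $\B$ acts by left multiplication on the first factor through the $\C$-component of $F$. For $\A^*_\N$, I would specialize (\ref{ES1}) to the $\C$-module $\C$ itself, yielding the dual exact sequence $\C^\op\to \B^*_{\C\boxtimes\N}\to \A^*_\N\boxtimes \End(\C)$, and realize $\A^*_\N\hookrightarrow \B^*_{\C\boxtimes\N}$ via a section of $\iota^*$ over the diagonal copy $\A^*_\N\boxtimes \id_\C$. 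The dimension check $\FPdim(\B^*_{\C\boxtimes \N})=\FPdim(\B)=\FPdim(\A)\FPdim(\C)=\FPdim(\A^*_\N)\FPdim(\C)$ is immediate, and $\C\cap \A^*_\N=\Vect$ follows from the construction. Corollary \ref{compdim} then upgrades the factorization $\B^*_{\C\boxtimes \N}=\C\bullet \A^*_\N$ to an exact one.

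For (ii), I would construct the required tensor functor $F\colon \B\to \C\boxtimes \End(\N)$ from the $\B$-action on the induced $\B$-module category $\tilde{\N}:=\B\boxtimes_\A \N$, which by Theorem \ref{exfac} is equivalent as an abelian category to $\C\boxtimes \N$. Once $F$ is shown to be a tensor functor with $\iota$ injective, $F$ dominant, and $\A\subseteq \Ker(F)$, the identity $\FPdim(\B)=\FPdim(\C)\FPdim(\A)$ built into the exact factorization together with \cite[Thm.~3.4]{EG} forces $F$ to define an exact sequence with respect to $\N$. Specializing $\N=\A$ recovers the stated special case.

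The main obstacle is verifying, in (ii), that the induced-module action really lands inside the subcategory $\C\boxtimes \End(\N)\subseteq \End(\C\boxtimes\N)$ rather than in the larger multifusion category $\End(\C)\boxtimes \End(\N)$, since the ``matched-pair''-type interaction between $\C$ and $\A$ in $\B$ twists left multiplication on the first factor in a way that is not of the required form. I expect this has to be reconciled by using the unique factorization of simples supplied by Theorem \ref{exfac} to furnish the needed tensor coherence, or, equivalently, by deducing (ii) from (i) via dualization of $\B$ with respect to a carefully chosen module category. The analogous subtlety in (i) --- identifying $\A^*_\N$ inside $\B^*_{\C\boxtimes \N}$ --- is resolved through the dual sequence (\ref{ES1}).
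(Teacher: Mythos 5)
Your overall strategy --- exhibit two fusion subcategories of the dual category, check trivial intersection, and close with a Frobenius--Perron dimension count via Theorem \ref{exfac} and \cite[Theorem 3.4]{EG} --- matches the paper's, but in both parts the one step that carries the real content is left unresolved. In (i), the embedding $\A^*_\N\hookrightarrow\B^*_{\C\boxtimes\N}$ cannot be obtained from ``a section of $\iota^*$ over the diagonal copy $\A^*_\N\boxtimes\id_\C$'': surjective tensor functors between fusion categories do not admit sections in general, and the existence of such a section here is essentially equivalent to the embedding you are trying to produce, so appealing to (\ref{ES1}) gains nothing. The paper instead chooses a nonzero $N\in\N$, sets $A:=\underline{\End}_\A(N)$, an algebra in $\A\subseteq\B$, and invokes \cite[Theorem 3.6]{EG} to identify $\Mod_\B(A)\cong\C\boxtimes\N$ as $\B$-module categories; then $\B^*_{\C\boxtimes\N}\cong\Bimod_\B(A)$ visibly contains $\Bimod_\A(A)=\A^*_\N$, while the copy of $\C$ comes, as in your sketch, from dualizing the surjection $F$. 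Trivial intersection is then read off from the fact that the objects coming from $\A^*_\N$ correspond to objects of $\Ker(F)$ whereas nontrivial objects of $\C$ do not, and Lemma \ref{simple} plus the dimension count finishes.

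In (ii) you correctly name the obstacle --- that the action of $\B$ on $\B\boxtimes_\A\N\cong\C\boxtimes\N$ has no reason to land in $\C\boxtimes\End(\N)$ rather than in $\End(\C\boxtimes\N)$ --- but you do not overcome it; ``unique factorization of simples furnishes the needed tensor coherence'' is a hope, not an argument. The paper sidesteps the problem entirely by never constructing a functor out of $\B$ itself: writing $\N=\Mod_\A(A)$ and $\M:=\Mod_\B(A)\cong\C\boxtimes\N$, it forms the sequence $\A^*_\N\to\B^*_\M\to\C\boxtimes\End(\N)$, in which the second functor is restriction along $\C\subseteq\B$ and therefore lands in $\C^*_{\C\boxtimes\N}\cong\C\boxtimes\End(\N)$ automatically; then \cite[Theorem 3.4]{EG} applies because $\FPdim(\A^*_\N)\FPdim(\C)=\FPdim(\A)\FPdim(\C)=\FPdim(\B)=\FPdim(\B^*_\M)$ and $\A^*_\N\subseteq\Ker(F)$. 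The stated sequence for $\A$ and $\B$ themselves is then the case $\N=\A$, $A=\mathbf{1}$, under the standard identifications $\A^*_\A\cong\A$ and $\B^*_\B\cong\B$. So your instinct that (ii) should be obtained ``via dualization'' is the right one, but it must be carried out with the explicit algebra $A$ and the bimodule identifications; as written, both halves of your proposal stop exactly where the proof begins.
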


\begin{proof}
(i) Choose a nonzero object $N\in\mathcal{N}$, and consider the algebra object $A:=\underline{\End}_{\A}(N)$ in $\A$. Then $A$ is an algebra in $\mathcal{B}$.

Let $\M:=\Mod_{\mathcal{B}}(A)$ be the category of right $A-$modules in $\mathcal{B}$, and consider $\M$ as a left module category over $\mathcal{B}$ in the usual way. By \cite[Theorem 3.6]{EG}, $\M$ and $\C\boxtimes \mathcal{N}$ are equivalent as $\mathcal{B}-$module categories.

Since $\mathcal{B}^*_{\M}$ can be identified with the fusion category $\Bimod_{\mathcal{B}}(A)$, we see
that $\mathcal{B}^*_{\M}$ contains $\Bimod_{\A}(A)=\mathcal{A}^*_{\N}$ as a fusion subcategory.

Also, by taking the dual of $\mathcal{B}\xrightarrow{F} \C\boxtimes \End(\mathcal{N})$ with respect to the module category $\M=\C\boxtimes \mathcal{N}$, we get that $\mathcal{B}^*_{\M}$ contains $\C$ as a fusion subcategory.

Furthermore, by assumption, every object from $\mathcal{A}^*_{\N}$ is sent to $\End(\mathcal{N})$ under $F$, while every non-trivial object of $\C$ is not. This means that $\mathcal{A}^*_{\N}\cap \C=\Vect$ (inside $\mathcal{B}^*_{\M}$). Hence by Lemma \ref{simple}, the objects $X\otimes Y$ ($X\in \Irr(\mathcal{A}^*_{\N})$ and $Y\in \Irr(\C)$) are pairwise non-isomorphic simple objects in $\mathcal{B}^*_{\M}$. 

Finally, we have
\begin{eqnarray*}
\lefteqn{\sum_{X\in \Irr(\mathcal{A}^*_{\N}),\,Y\in \Irr(\C)}\FPdim_{\mathcal{B}^*_{\M}}(X\otimes Y)^2}\\
= & & \sum_{X\in \Irr(\mathcal{A}^*_{\N}),\,Y\in \Irr(\C)}\FPdim_{\mathcal{A}^*_{\N}}(X)^2 \FPdim_{\C}(Y)^2\\
= & & 
\FPdim(\mathcal{A}^*_{\N})\FPdim(\C)=
\FPdim(\A)\FPdim(\C).
\end{eqnarray*}
Since by \cite[Theorem 3.4]{EG}, $$\FPdim(\A)\FPdim(\C)=\FPdim(\mathcal{B})=\FPdim(\mathcal{B}^*_{\M}),$$ we conclude from Theorem \ref{exfac} that 
$\mathcal{B}^*_{\M}=\mathcal{A}^*_{\N}\bullet \C$ is an exact factorization, as desired.

(ii) Let $\N$ be an indecomposable $\A$-module category, and let $A$ be an algebra in $\A$ such that $\N=\Mod_{\A}(A)$. Since $A$ is an algebra in $\B$, we may consider the $\B-$module category $\M:=\Mod_{\B}(A)$ of right $A-$modules in $\B$. 
Observe that since $\M=\B\boxtimes_{\A}\N=(\C\bullet \A)\boxtimes_{\A}\N$, $\M=\C\boxtimes \N$ as a right $\C$-module category.

Now, let $\bar{\A}:=\A^*_{\N}$, $\bar{\B}:=\B^*_{\M}$ be the dual fusion categories 
of $\A$, $\B$ with respect to $\N$, $\M$, respectively. Then we have a sequence
$$\bar{\A}\xrightarrow{\iota} \bar{\B}\xrightarrow{F} \mathcal{C}\boxtimes \End(\N).$$
Since $\FPdim(\bar{\A})\FPdim(\C)=\FPdim(\bar{\B})$, and $\bar{\A}$ is in the kernel of $F$, it follows from \cite[Theorem 3.4]{EG} that this sequence of fusion categories is exact with respect to the indecomposable $\A$-module category $\N$, as desired. 

The proof of the theorem is complete.
\end{proof}

We can now deduce from Theorem \ref{mainnew} and (\ref{dualgt}) that any extension of a group-theoretical fusion category by another one is {\em Morita equivalent} to a fusion category with an exact factorization into a product of two group-theoretical fusion subcategories.

Retain the notation from Subsection 2.3.

\begin{corollary}\label{gtext}
Let $\C(G_1,\omega_1,L_1,\psi_1)$ and $\C(G_2,\omega_2,L_2,\psi_2)$ be two group-theoretical fusion categories, and let $\mathcal{M}=\mathcal{M}(L_3,\psi_3)$ be an indecomposable module category over $\C(G_1,\omega_1,L_1,\psi_1)$. Suppose 
$$\C(G_1,\omega_1,L_1,\psi_1)\xrightarrow{\iota}\mathcal{B}\xrightarrow{F} \C(G_2,\omega_2,L_2,\psi_2)\boxtimes \End(\mathcal{M})$$ is an exact sequence with respect to $\mathcal{M}$. Then the dual fusion category $\mathcal{B}^*_{\C(G_2,\omega_2,L_2,\psi_2)\boxtimes \mathcal{M}}$ admits an exact factorization $$\mathcal{B}^*_{\C(G_2,\omega_2,L_2,\psi_2)\boxtimes \mathcal{M}}=\C(G_1,\omega_1,L_3,\psi_3)\bullet \C(G_2,\omega_2,L_2,\psi_2)$$
into a product of two group-theoretical fusion subcategories. \qed
\end{corollary}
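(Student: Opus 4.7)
The proof is a direct combination of Theorem \ref{mainnew}(i) with the formula (\ref{dualgt}) for duals of group-theoretical categories, so no new ideas are required; the task is merely to identify the pieces.

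First I would apply Theorem \ref{mainnew}(i) to the given exact sequence, taking $\A:=\C(G_1,\omega_1,L_1,\psi_1)$, $\C:=\C(G_2,\omega_2,L_2,\psi_2)$, and $\N:=\mathcal{M}=\mathcal{M}(L_3,\psi_3)$, which is an indecomposable $\A$-module category by hypothesis. The theorem then yields an exact factorization
$$\mathcal{B}^*_{\C\boxtimes \mathcal{M}}=\C\bullet \A^*_{\mathcal{M}}$$
of the dual fusion category. By Remark \ref{permute}, this is equivalent to
$$\mathcal{B}^*_{\C\boxtimes \mathcal{M}}=\A^*_{\mathcal{M}}\bullet \C,$$
which is the order appearing in the statement of the corollary.

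Next I would identify $\A^*_{\mathcal{M}}$ as a group-theoretical category using formula (\ref{dualgt}). Since $\A=\C(G_1,\omega_1,L_1,\psi_1)$ and $\mathcal{M}=\mathcal{M}(L_3,\psi_3)$, the formula gives
$$\A^*_{\mathcal{M}}=\C(G_1,\omega_1,L_1,\psi_1)^*_{\mathcal{M}(L_3,\psi_3)}=\C(G_1,\omega_1,L_3,\psi_3).$$
Substituting this identification into the exact factorization above produces exactly
$$\mathcal{B}^*_{\C(G_2,\omega_2,L_2,\psi_2)\boxtimes \mathcal{M}}=\C(G_1,\omega_1,L_3,\psi_3)\bullet \C(G_2,\omega_2,L_2,\psi_2),$$
which is the desired statement, with both factors manifestly group-theoretical.

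There is essentially no obstacle here; the only thing worth checking carefully is that the indecomposable module category $\mathcal{M}(L_3,\psi_3)$ appearing in the hypothesis is the same module category over $\A$ that parametrizes the dual $\A^*_{\mathcal{M}}$ via (\ref{dualgt})---but this is precisely how the parametrization in Subsection 2.3 was set up, so it is automatic. Thus the entire argument reduces to citing Theorem \ref{mainnew}(i) and applying (\ref{dualgt}).
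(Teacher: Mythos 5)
Your proof is correct and is exactly the argument the paper intends: the corollary is stated with a \qed precisely because it follows immediately from Theorem \ref{mainnew}(i) applied with $\A=\C(G_1,\omega_1,L_1,\psi_1)$, $\C=\C(G_2,\omega_2,L_2,\psi_2)$, $\N=\M(L_3,\psi_3)$, together with the identification $\A^*_{\M(L_3,\psi_3)}=\C(G_1,\omega_1,L_3,\psi_3)$ from (\ref{dualgt}). Nothing is missing.
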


\begin{example}\label{main}
Let $G_1,G_2$ be finite groups. 

(i) Let $\mathcal{M}=\mathcal{M}(L_3,\psi_3)$ be an indecomposable module category over $\Rep(G_1)$, and suppose $$\Rep(G_1)\xrightarrow{\iota}\mathcal{B}\xrightarrow{F} \Vect(G_2,\omega_2)\boxtimes \End(\mathcal{M})$$ is an exact sequence with respect to $\mathcal{M}$. Then the dual fusion category $\mathcal{B}^*_{\Vect(G_2,\omega_2)\boxtimes \mathcal{M}}$ admits an exact factorization $$\mathcal{B}^*_{\Vect(G_2,\omega_2)\boxtimes \mathcal{M}}=\C(G_1,1,L_3,\psi_3)\bullet \Vect(G_2,\omega_2).$$

(ii) Let $\M=\M(L_3,\psi_3)$ be an indecomposable module category over $\Vect(G_1,\omega_1)$, and suppose  
$$
\Vect(G_1,\omega_1)\xrightarrow{\iota} \mathcal{B}\xrightarrow{F} \Vect(G_2,\omega_2)\boxtimes \End(\M)$$ 
is an exact sequence with respect to $\M$. 
Then the dual fusion category $\mathcal{B}^*_{\Vect(G_2,\omega_2)\boxtimes \M}$ admits an exact factorization $$\mathcal{B}^*_{\Vect(G_2,\omega_2)\boxtimes \M}=\C(G_1,\omega_1,L_3,\psi_3)\bullet \Vect(G_2,\omega_2).$$
\end{example}

\begin{corollary}\label{kacalg}
Let $G_1,G_2$ be finite groups.

(i) Suppose $$\Rep(G_1)\xrightarrow{\iota}\mathcal{B}\xrightarrow{F} \Vect(G_2,\omega_2)$$ is an exact sequence with respect to $\mathcal{M}(1,1)$ (= the standard fiber functor on $\Rep(G_1)$). Then 
$$\mathcal{B}=\Vect(G,\omega)^*_{\Vect(G_2,\omega_2)}=\C(G,\omega,G_1,1)$$ 
for some finite group $G$ with an exact factorization $G=G_1G_2$, and a $3$-cocycle $\omega\in H^3(G,k^{\times})$ that is trivial on $G_1$ and restricts to $\omega_2$ on $G_2$.

(ii) Suppose $$
\Vect(G_1,\omega_1)\xrightarrow{\iota} \mathcal{B}\xrightarrow{F} \Vect(G_2,\omega_2)\boxtimes \End(\tilde{\mathcal{M}}(1,1))$$ 
is an exact sequence with respect to $\tilde{\mathcal{M}}(1,1)$. Then 
$$\mathcal{B}=\Vect(G,\omega)^*_{\Vect(G_2,\omega_2)}=\C(G,\omega,G_1,1)$$ for some finite group $G$ with an exact factorization $G=G_1G_2$ and    
a $3$-cocycle $\omega\in H^3(G,k^{\times})$ that restricts to $\omega_1$ on $G_1$ and to $\omega_2$ on $G_2$.
\end{corollary}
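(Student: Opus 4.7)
The plan is to derive both parts as immediate consequences of Example \ref{main} specialised to $L_3=\psi_3=1$, combined with the classification of exact factorizations of pointed fusion categories given earlier in Section 3. The only non-formal ingredient is the identification of the right Ostrik module category that produces $\B$ as a dual.

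For part (i), first observe that $\mathcal{M}(1,1)=\Vect$ (the standard fiber functor on $\Rep(G_1)$), so $\End(\mathcal{M}(1,1))=\Vect$ and $\Vect(G_2,\omega_2)\boxtimes\mathcal{M}(1,1)=\Vect(G_2,\omega_2)$; moreover, $\C(G_1,1,1,1)\cong\Vect(G_1)$ since the dual of a pointed fusion category with respect to its regular module category is again pointed with the same underlying group. Example \ref{main}(i) therefore yields
$$\B^*_{\Vect(G_2,\omega_2)}=\Vect(G_1)\bullet\Vect(G_2,\omega_2),$$
which by the classification coincides with $\Vect(G,\omega)$ for some finite group $G=G_1G_2$ and some $\omega\in H^3(G,k^\times)$ satisfying $\omega|_{G_1}=1$ and $\omega|_{G_2}=\omega_2$. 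Dualising once more gives $\B=\Vect(G,\omega)^*_{\Vect(G_2,\omega_2)}$, with $\Vect(G_2,\omega_2)$ viewed as a $\Vect(G,\omega)$-module category. I would then identify this module category with $\tilde{\mathcal{M}}(G_1,1)$: the exact factorization realises the simple objects of the module as right cosets $G/G_1$ under the natural left $G$-action, and the condition $\omega|_{G_1}=1$ makes the $2$-cochain $\psi=1$ admissible, matching the definition of $\tilde{\mathcal{M}}(G_1,1)$. This yields $\B=\C(G,\omega,G_1,1)$.

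Part (ii) runs in parallel from Example \ref{main}(ii): here $\tilde{\mathcal{M}}(1,1)=\Vect(G_1,\omega_1)$ and $\C(G_1,\omega_1,1,1)\cong\Vect(G_1,\omega_1)$, so the exact factorization becomes $\Vect(G_1,\omega_1)\bullet\Vect(G_2,\omega_2)=\Vect(G,\omega)$ for some $G=G_1G_2$ with $\omega|_{G_i}=\omega_i$, and the same module-category identification delivers $\B=\C(G,\omega,G_1,1)$. I expect the main technical obstacle in both parts to be this last step: rigorously matching the $\Vect(G,\omega)$-module-category structure on the factor $\Vect(G_2,\omega_2)$ with Ostrik's $\tilde{\mathcal{M}}(G_1,1)$. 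Everything else reduces to routine specialisation of already established results and cocycle bookkeeping.
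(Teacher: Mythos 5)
Your proposal is correct and follows essentially the same route as the paper: specialize Example \ref{main} to $L_3=\psi_3=1$, observe that the resulting exact factorization $\Vect(G_1)\bullet\Vect(G_2,\omega_2)$ (resp.\ $\Vect(G_1,\omega_1)\bullet\Vect(G_2,\omega_2)$) is pointed and hence equals $\Vect(G,\omega)$ for a group $G$ with exact factorization $G=G_1G_2$ and the stated cocycle conditions, then dualize back. The paper's proof is terser and leaves implicit the identification of the $\Vect(G,\omega)$-module category $\Vect(G_2,\omega_2)$ with $\tilde{\mathcal{M}}(G_1,1)$, which you correctly flag as the one step needing care.
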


\begin{proof}
(i) By specializing Example \ref{main} (i) to $\mathcal{M}=\M(1,1)$ we get that $$\mathcal{B}^*_{\Vect(G_2,\omega_2)}=\Vect(G_1)\bullet \Vect(G_2,\omega_2).$$ Since $\Vect(G_1)\bullet \Vect(G_2,\omega_2)$ is pointed, it is equal to $\Vect(G,\omega)$ for some finite group $G$ with an exact factorization $G=G_1G_2$, and a $3$-cocycle $\omega\in H^3(G,k^{\times})$ that is trivial on $G_1$ and restricts to $\omega_2$ on $G_2$. Thus, $$\B=\Vect(G,\omega)^*_{\Vect(G_2,\omega_2)}=\C(G,\omega,G_1,1),$$ as claimed. 

(ii) By specializing Example \ref{main} (ii) to $\M=\tilde{\M}(1,1)$ we get that $$\mathcal{B}^*_{\Vect(G_2,\omega_2)}=\Vect(G_1,\omega_1)\bullet \Vect(G_2,\omega_2).$$ Since $\Vect(G_1,\omega_1)\bullet \Vect(G_2,\omega_2)$ is pointed, it is equal to $\Vect(G,\omega)$ for some finite group $G$ with an exact factorization $G=G_1G_2$, and a $3$-cocycle $\omega\in H^3(G,k^{\times})$ that restricts to $\omega_1$ on $G_1$ and to $\omega_2$ on $G_2$. Thus, $$\B=\Vect(G,\omega)^*_{\Vect(G_2,\omega_2)}=\C(G,\omega,G_1,1),$$ as claimed.
\end{proof}

\begin{example}\label{kacalg2}
Let $B$ be a Kac (semisimple) Hopf algebra associated with an exact factorization $G=G_1G_2$ of finite groups (see e.g., \cite{K}, \cite{M}, \cite{Na}). Then $B$ fits into an exact sequence of semisimple Hopf algebras
$$\Fun(G_2)\to B\to k[G_1].$$ 
Let $\B:=\Rep(B)$. We have an exact sequence $$\Rep(G_1)\xrightarrow{\iota}\mathcal{B}\xrightarrow{F} \Vect(G_2)$$ with respect to the standard fiber functor on $\Rep(G_1)$.
By Corollary \ref{kacalg} (i), $\mathcal{B}=\C(G,\omega,G_1,1)$ for some $3$-cocycle $\omega\in H^3(G,k^{\times})$ that is trivial on $G_1$ and $G_2$ (but not necessarily on $G$). Thus every Kac Hopf algebra is group-theoretical. (This result was proved by Natale \cite{Na}.)

Conversely, Corollary \ref{kacalg} (i) says that if $\B$ fits into an exact sequence
$$\Rep(G_1)\xrightarrow{\iota}\mathcal{B}\xrightarrow{F} \Vect(G_2,\omega_2)$$ of fusion categories (with respect to the standard fiber functor on $\Rep(G_1)$), then $\mathcal{B}=\C(G,\omega,G_1,1)$ for some finite group $G$ with an exact factorization $G=G_1G_2$, and a $3$-cocycle $\omega\in H^3(G,k^{\times})$ that is trivial on $G_1$ and restricts to $\omega_2$ on $G_2$. Thus $\B$ is the representation category of a semisimple group-theoretical quasi-Hopf algebra (which may be referred to as the Kac quasi-Hopf algebra corresponding to the exact factorization $G=G_1G_2$ and the $3-$cocycle $\omega\in H^3(G,k^{\times})$, see e.g., \cite{S}).
\end{example}

\begin{example}\label{equiv} 
Let $\B=\C^{G}$ be a $G$-equivariantization of a fusion category $\C$. We have an exact sequence
$$\Rep(G)\xrightarrow{\iota}\mathcal{B}\xrightarrow{F} \C$$ with respect to the standard fiber functor on $\Rep(G)$ (see e.g., \cite[Example 4.4]{EG}). Thus, $\B^*_{\C}$ admits an exact factorization $\B^*_{\C}=\Vect(G)\bullet \C$. 
In \cite[Example 7.12.25]{EGNO} it is explained that 
$\Vect(G)\bullet \C$ in this case is the semidirect product discussed in that example. 
\end{example}

\begin{example}\label{Nikshych}
In Theorem \ref{main} one does not always get a group-theoretical fusion category. In particular, an exact factorization with two group-theoretical factors is not always group-theoretical
(even when one of the factors is pointed). 

For example, consider Nikshych's
example of the representation category $\B$ of a non group-theoretical semisimple Hopf algebra of dimension $4p^2$. By \cite[Corollary 4.6]{Ni}, $\B=\C^{\mathbb{Z}/2\mathbb{Z}}$ is a $\mathbb{Z}/2\mathbb{Z}$-equivariantization of a group-theoretical fusion category $\C$. So we have an exact sequence
$$\Rep(\mathbb{Z}/2\mathbb{Z})\xrightarrow{\iota}\mathcal{B}\xrightarrow{F} \C$$ with respect to the standard fiber functor on $\Rep(\mathbb{Z}/2\mathbb{Z})$. Hence the resulting exact factorization fusion category $\Vect(\mathbb{Z}/2\mathbb{Z})\bullet \C$ is not group-theoretical.
\end{example}

\begin{remark}
M. Mombelli and S. Natale obtained some related results for exact sequences of finite tensor categories in \cite[Section 7.2]{MN}.
\end{remark}

\begin{question}
If $\B=\A\bullet \C$ and $\A,\C$ are {\em weakly} group-theoretical fusion categories,
is it true that $\B$ is weakly group-theoretical? (See \cite{ENO2} for the definition of a weakly group-theoretical fusion category.) By Theorem \ref{mainnew}, this question 
is equivalent to the question if an extension of a weakly group-theoretical fusion category by another one is weakly group-theoretical. (Example \ref{Nikshych} shows that the answer is ``no" for group-theoretical fusion categories.)
\end{question}

\end{document}